\documentclass{amsart}

\usepackage{amsmath,amssymb,gensymb}
\usepackage[a4paper,left=2.25cm,right=2.25cm,top=3cm,bottom=3cm]{geometry}
\usepackage{tikz}\usetikzlibrary{decorations.pathmorphing,decorations.markings}
\usepackage{hyperref}
\usepackage{comment}
\usepackage{color}

\setlength{\parskip}{4pt}
\setlength{\multlinegap}{0pt}
\allowdisplaybreaks[1]

\newcommand{\bra}[1]{\left\langle #1\right|}
\newcommand{\ket}[1]{\left|#1\right\rangle}

\renewcommand{\b}[1]{\bar{#1}}
\renewcommand{\leq}{\leqslant}
\renewcommand{\geq}{\geqslant}

\def\phid{\phi^\dagger}

\newtheorem*{defn}{Definition}
\newtheorem*{lem}{Lemma}
\newtheorem*{thm}{Theorem}


\title[A summation formula for Macdonald polynomials]
{A summation formula for Macdonald polynomials}

\author{Jan~de~Gier}

\author{Michael~Wheeler}

\address{School of Mathematics and Statistics, University of Melbourne, Parkville, Victoria 3010, Australia}
\email{jdgier@unimelb.edu.au, wheelerm@unimelb.edu.au}


\begin{document}

\begin{abstract}
We derive an explicit sum formula for symmetric Macdonald polynomials. Our expression contains multiple sums over the symmetric group and uses the action of Hecke generators on the ring of polynomials. In the special cases $t=1$ and $q=0$, we recover known expressions for the monomial symmetric and Hall--Littlewood polynomials, respectively. Other specializations of our formula give new expressions for the Jack and $q$--Whittaker polynomials.
\end{abstract}

\maketitle

\section{Introduction}

Symmetric Macdonald polynomials \cite{macd88,MacdBook} are a family of multivariable orthogonal polynomials indexed by partitions, whose coefficients depend rationally on two parameters $q$ and $t$. In the case $q=t$ they degenerate to the celebrated Schur polynomials, which are central in the representation theory of both the general linear and symmetric groups. Using the notation $m_\lambda$ to denote the monomial symmetric polynomial indexed by a partition $\lambda$, the standard definition of Macdonald polynomials in the literature is the following.
\begin{defn}
The Macdonald polynomial $P_{\lambda}(x_1,\dots,x_n;q,t)$ is the unique homogeneous symmetric polynomial in $(x_1,\dots,x_n)$ which satisfies
\begin{align*}
P_{\lambda}(x_1,\dots,x_n;q,t)
=
m_{\lambda}(x_1,\dots,x_n)
+
\sum_{\mu < \lambda}
c_{\lambda,\mu}(q,t)
m_{\mu}(x_1,\dots,x_n),
\qquad
\langle P_{\lambda}, P_{\mu} \rangle = 0, \ \lambda \neq \mu,
\end{align*}
with respect to the Macdonald inner product on power sum symmetric functions (\cite{MacdBook}, Chapter VI, Equation (1.5)),
and where $<$ denotes the dominance order on partitions (\cite{MacdBook}, Chapter I, Section 1).
\end{defn}

Up to normalization, Macdonald polynomials can alternatively be defined as the unique eigenfunctions of certain linear difference operators acting on the space of all symmetric polynomials \cite{MacdBook}. They can also be expressed combinatorially as multivariable generating functions \cite{HaglundHL1,HaglundHL2,RamY}.

Opdam \cite{Opdam} and Cherednik \cite{Cher95a,Cher95b} generalized the earlier theory of Macdonald to a non-symmetric setting, and defined non-symmetric Macdonald polynomials $E_\mu$ that are indexed by compositions $\mu$. Non-symmetric Macdonald polynomials are defined as joint eigenfunctions of a family of commuting operators in the double affine Hecke algebra \cite{Cher95a,Cher95b}, and the symmetric polynomial $P_\lambda$ is obtained as the sum over all 
$E_\mu$ whose index $\mu$ lies in the orbit of $\lambda$ under the symmetric group. 

The purpose of this article is to state and prove an explicit sum formula for calculating Macdonald polynomials. This result is derived directly from the recent work \cite{CantinidGW}.

\section{Notation}

\subsection{Partitions and compositions}
\label{sec:partitions}
 
Fix a partition $\lambda=(\lambda_1,\dots,\lambda_n)$, where $\lambda_1 \geq \cdots \geq \lambda_n \geq 0$. Let $\ell(\lambda)$ denote the length of $\lambda$, which is the number of non-trivial parts $\lambda_i > 0$ in $\lambda$. We associate a monomial $x_{\lambda}$ in the variables $(x_1,\dots,x_n)$ to any partition, which is simply $x_{\lambda} = \prod_{i=1}^{\ell(\lambda)} x_i$. 

We write $\lambda' = (\lambda'_1,\dots,\lambda'_r)$ for the partition conjugate to $\lambda$, as is standard in the literature.

Let $\lambda$ be a partition whose largest part is $\lambda_1 = r$. For all $0 \leq k \leq r$, we define a partition $\lambda[k]$ which is obtained by replacing all parts in $\lambda$ of size $\leq k$ with 0. For example, for $\lambda = (3,3,2,1,1,1,0)$ we have
\begin{align*}
\lambda[0] = (3,3,2,1,1,1,0),
\quad
\lambda[1] = (3,3,2,0,0,0,0),
\quad
\lambda[2] = (3,3,0,0,0,0,0),
\quad
\lambda[3] = (0,0,0,0,0,0,0),
\end{align*}
and in general $\lambda[0] = \lambda$, $\lambda[r] = 0$.

We will also consider compositions $\mu = (\mu_1,\dots,\mu_n)$ whose parts satisfy 
$\mu_i \geq 0$, but are not ordered in any particular way. We write $\mu^{+}$ to denote the unique partition obtained by arranging the parts of $\mu$ in weakly decreasing order. For 
$\mu$ an arbitrary composition, the part-multiplicity function $m_i(\mu)$\footnote{Not to be confused with the standard notation $m_\lambda$ for monomial symmetric polynomials.} is defined as follows:
\begin{align*}
m_i(\mu)
=
\#\{ \mu_k : \mu_k = i \}.
\end{align*}
Further to this, two functions which take a pair of compositions as arguments will be used in our main formula: 
\begin{align*}
&
a_i(\lambda,\mu)
=
\#\{ (\lambda_k,\mu_k): \lambda_k = 0,\ \mu_k = i \},
\quad
b_i(\lambda,\mu)
=
\#\{ (\lambda_k,\mu_k): i = \lambda_k > \mu_k \}.
\end{align*}
For example, for $\lambda = (4,4,3,3,3,2,0,0,0)$ and $\mu = (0,3,0,0,3,0,4,3,4)$, we have
\begin{align*}
a_i(\lambda,\mu) 
\equiv
a_i\left(
\begin{array}{ccccccccc}
4 & 4 & 3 & 3 & 3 & 2 & 0 & 0 & 0
\\
0 & 3 & 0 & 0 & 3 & 0 & 4 & 3 & 4
\end{array}
\right)
=
\left\{
\begin{array}{ll}
0, & i=1
\\
0, & i=2
\\
1, & i=3
\\
2, & i=4
\end{array}
\right.
\end{align*}
\begin{align*}
b_i(\lambda,\mu) 
\equiv
b_i\left(
\begin{array}{ccccccccc}
4 & 4 & 3 & 3 & 3 & 2 & 0 & 0 & 0
\\
0 & 3 & 0 & 0 & 3 & 0 & 4 & 3 & 4
\end{array}
\right)
=
\left\{
\begin{array}{ll}
0, & i=1
\\
1, & i=2
\\
2, & i=3
\\
2, & i=4
\end{array}
\right.
\end{align*}

\subsection{Permutations}

Let $S_n$ be the group of all permutations of $(1,\dots,n)$. For any partition $\lambda$, let $S_{\lambda}$ be the quotient of $S_n$ by the subgroup $S^{\lambda}_n$ of permutations which leave $\lambda$ invariant. $S_{\lambda} = S_n/S^{\lambda}_n$ is thus the set of all permutations which have a distinct action on $\lambda$. For example, for $\lambda = (2,2,0,0)$ we find
\begin{align*}
S_{\lambda}
=
\{ (1,2,3,4),\ (1,3,2,4),\ (1,3,4,2),\ (3,1,2,4),\ (3,1,4,2),\ (3,4,1,2) \}
\end{align*}
as the complete set of permutations with a distinguishable action on $\lambda$.

\subsection{Hecke algebra}

We consider polynomial representations of the Hecke algebra of type $A_{n-1}$, with generators $T_{i}$ given by 
\begin{align}
\label{hecke-gen}
T_i
=
t-\frac{tx_i-x_{i+1}}{x_i-x_{i+1}}(1-s_i),
\qquad
1 \leq i \leq n-1,
\end{align} 
where $s_i$ is the transposition operator with action $s_i f(\dots,x_i,x_{i+1},\dots) = f(\dots,x_{i+1},x_i,\dots)$ on functions in $(x_1,\dots,x_n)$. It can be verified that the operators \eqref{hecke-gen} indeed give a faithful representation of the Hecke algebra:
\begin{align}
\label{hecke-alg}
(T_{i}-t)(T_{i}+1)=0,
\qquad
T_{i} T_{i\pm 1} T_{i} = T_{i \pm 1} T_i T_{i\pm 1},
\qquad
T_i T_j = T_j T_i, \ |i-j| > 1.
\end{align}
In view of the relations for the generators, we can define $T_{\sigma}$ unambiguously as any product of simple transpositions $T_{i}$ which gives the permutation $\sigma$. For example, for $\sigma = (3,2,1,4)$ we can write
\begin{align*}
T_{\sigma}
=
T_{2} T_{1} T_{2}
=
T_{1} T_{2} T_{1},
\end{align*}
with both expressions having an equivalent action on the ring of polynomials in 
$(x_1,\dots,x_n)$.

\section{Main formula}

\begin{thm}
Let $\lambda = (\lambda_1,\dots,\lambda_n)$ be a partition with largest part 
$\lambda_1 = r$, and from this define a set of partitions $\lambda[0],\dots,\lambda[r]$ as in Section \ref{sec:partitions}. Then the Macdonald polynomial $P_{\lambda}$ can be written in the form
\begin{align}
\label{main-formula}
P_{\lambda}(x_1,\dots,x_n;q,t)
=
\sum_{\sigma \in S_{\lambda} }
T_{\sigma}
\circ
x_{\lambda}
\circ
\prod_{i=1}^{r-1}
\left(
\sum_{\sigma \in S_{\lambda[i]}}
C_i
\left(
\begin{array}{@{}c@{}}
\lambda[i-1]
\medskip \\ 
\sigma\circ\lambda[i]
\end{array}
\right)
T_{\sigma}
\circ
x_{\lambda[i]}
\circ
\right)
1
\end{align}
with coefficients\footnote{We will use three notations for the coefficients interchangeably:
$
C_i( \lambda, \mu) 
\equiv
C_i\left(\begin{array}{@{}c@{}} \lambda \\ \mu \end{array}\right)
\equiv
C_i\left(
\begin{array}{@{}c@{}c@{}c@{}} \lambda_1 & \cdots & \lambda_n 
\\ 
\mu_1 & \cdots & \mu_n \end{array}
\right)
$.} that satisfy $C_i( \lambda, \mu) = 0$ if any $0 < \lambda_k < \mu_k$, and 
\begin{align}
\label{coeff}
C_i(\lambda,\mu)
\equiv
C_i\left(\begin{array}{@{}c@{}c@{}c@{}} 
\lambda_1 & \cdots & \lambda_n \\ 
\mu_1 & \cdots & \mu_n \end{array}\right)
=
\prod_{j = i+1}^{r}
\left(
q^{(j-i)a_j(\lambda,\mu)}
\prod_{k=1}^{b_j(\lambda,\mu)}
\frac{1-t^k}{1-q^{j-i}t^{\lambda'_i-\lambda'_j+k}}
\right),
\end{align}
otherwise.
\end{thm}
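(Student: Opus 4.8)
The plan is to recognize formula \eqref{main-formula} as an iterated application of a "column-adding" operation, and to derive it from the branching structure of non-symmetric Macdonald polynomials established in \cite{CantinidGW}. Recall that $P_\lambda = \sum_{\mu} E_\mu$, the sum running over the $S_n$-orbit of $\lambda$, and that each $E_\mu$ is built by repeatedly applying affine Hecke generators (the operators $T_i$ together with a cyclic/affine shift) starting from the constant $1$. The key observation is that the partition $\lambda$ is reconstructed one column at a time: the innermost factor produces $x_{\lambda[r-1]}$ symmetrized, which is (up to the $C_i$ weights) the polynomial for the single-column partition $\lambda[r-1]$; applying $\sum_{\sigma\in S_{\lambda[i]}} C_i(\lambda[i-1],\sigma\circ\lambda[i])\,T_\sigma\circ x_{\lambda[i]}\circ$ transforms the polynomial indexed by $\lambda[i]$ into the one indexed by $\lambda[i-1]$. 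So the heart of the matter is a one-step identity.

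Concretely, I would first isolate and prove the following \emph{single-column recursion}: if $\nu = \mu[k]$ and $\tilde\nu = \mu[k-1]$ differ by inserting a column of height $\ell(\tilde\nu)$, then
\begin{align*}
P_{\tilde\nu}
=
\sum_{\sigma \in S_{\tilde\nu}}
C_k\!\left(\begin{array}{@{}c@{}}\tilde\nu \\ \sigma\circ\nu\end{array}\right)
T_\sigma \circ x_{\tilde\nu} \circ P_{\nu}.
\end{align*}
This is where the explicit coefficient \eqref{coeff} must be pinned down. I expect to obtain it by tracking how the Cherednik–Macdonald operators, or equivalently the Hecke-algebra intertwiners, act when one multiplies by $x_{\tilde\nu}$ (which shifts a block of variables $x_1,\dots,x_{\ell(\tilde\nu)}$ by one in degree) and then re-sorts. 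The powers of $q$ of the form $q^{(j-i)a_j}$ come from the affine shift acting on parts that were moved past the "wrap-around", while the ratios $\frac{1-t^k}{1-q^{j-i}t^{\lambda'_i-\lambda'_j+k}}$ are exactly the normalization constants that appear in Macdonald's principal specialization / the $c(\square)$-type factors, here controlled by the conjugate-partition data $\lambda'_i - \lambda'_j$ (the difference in column heights). The vanishing condition $C_i(\lambda,\mu)=0$ when $0<\lambda_k<\mu_k$ encodes that a variable assigned a part of $\lambda$ cannot have been sorted from a strictly larger part; this matches the support of the $T_\sigma$-expansion coming from \cite{CantinidGW}.

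Second, I would assemble the full formula by downward induction on the number of columns, peeling off $\lambda[0]=\lambda, \lambda[1], \dots$ and applying the single-column recursion at each stage; the outermost $\sum_{\sigma\in S_\lambda} T_\sigma \circ x_\lambda\circ$ is the final ($i=0$) step, for which the coefficients are all $1$ because there is no further column to the left (equivalently $a_j,b_j$ vanish in the relevant range, or one simply checks $C_0 \equiv 1$ on the orbit of $\lambda$). The base case is $r=1$: a single-column partition, where the formula reduces to a symmetrization of $x_{\lambda}$ and reproduces the known Hall–Littlewood-type expression for $P_{(1^m)}$. Finally I would verify the stated degenerations ($t=1$ gives $m_\lambda$ since the $T_i$ collapse to symmetrizers and the $C_i$ become products of $q$-powers recording monomial multiplicities; $q=0$ gives the Hall–Littlewood $P_\lambda$) as consistency checks.

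\textbf{Main obstacle.} The crux is the single-column recursion and, within it, the precise bookkeeping that yields \eqref{coeff}: one must show that the combinatorial statistics $a_j(\lambda,\mu)$ and $b_j(\lambda,\mu)$ defined in Section \ref{sec:partitions} are exactly what emerges from iterating the Hecke/affine action in \cite{CantinidGW}, including the somewhat delicate dependence on $\lambda'_i - \lambda'_j$ in the denominators. I anticipate that the cleanest route is to not re-derive everything from the DAHA, but to quote the relevant structural result of \cite{CantinidGW} (a sum over $S_n$ of $T_\sigma$ acting on monomials, with explicit coefficients) and then perform a purely combinatorial reindexing/telescoping to bring it into the column-by-column shape \eqref{main-formula}. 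Matching the two sets of coefficients—theirs and the $C_i$ above—will be the longest computation, but it is bookkeeping rather than a conceptual difficulty.
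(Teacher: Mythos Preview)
Your proposal contains a genuine gap at its core: the ``single-column recursion''
\[
P_{\tilde\nu} \;=\; \sum_{\sigma \in S_{\tilde\nu}} C_k\!\left(\begin{array}{@{}c@{}}\tilde\nu \\ \sigma\circ\nu\end{array}\right) T_\sigma \circ x_{\tilde\nu} \circ P_{\nu},
\qquad \nu=\lambda[k],\ \tilde\nu=\lambda[k-1],
\]
is not true, and the intermediate stages of \eqref{main-formula} are \emph{not} symmetric Macdonald polynomials. A degree count already rules this out: $x_{\tilde\nu}=x_1\cdots x_{\ell(\tilde\nu)}$ has degree $\lambda'_k$, so the right-hand side has degree $|\lambda[k]|+\lambda'_k$, whereas $|\lambda[k-1]|-|\lambda[k]| = k\cdot m_k(\lambda)$, which is generally different. (Concretely, for $\lambda=(2,1)$, $k=1$: your recursion would give a degree-$4$ polynomial, but $P_{(2,1)}(x_1,x_2)$ has degree $3$.) Relatedly, $\lambda[k-1]$ and $\lambda[k]$ do not differ by a single column of height $\ell(\tilde\nu)$; they differ by zeroing out all parts equal to $k$.

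What the paper actually does is quite different. The input from \cite{CantinidGW} is not a sum over $S_n$ with explicit coefficients, but a \emph{matrix product formula} \eqref{matrix-product} for certain non-symmetric polynomials $f_\lambda$, written as a trace over Fock modules of several commuting copies of the $t$-boson algebra \eqref{t-bos}, with an $L$-operator $L^{(s)}$ for each column $1\leq s\leq r$. Peeling off the trace one $L^{(s)}$ at a time produces a family of auxiliary non-symmetric polynomials $f^{(s)}_\lambda$ (with $f^{(1)}_\lambda=f_\lambda$ and $f^{(r)}_\mu=x_\mu$) satisfying the recursion \eqref{rec-rel2}; the symmetrisation $P_\lambda=\sum_{\sigma\in S_\lambda} T_\sigma\circ f_\lambda$ is applied once, at the very end. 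The coefficients $C_s(\lambda,\mu)$ are obtained not from DAHA intertwiners or principal-specialisation heuristics, but from an explicit evaluation of the single-layer trace $T^{(s)}_{\lambda,\mu}={\rm Tr}\big[L^{(s)}_{\lambda_1,\mu_1}(x_1)\cdots L^{(s)}_{\lambda_n,\mu_n}(x_n)S^{(s)}\big]$ in the Fock representation: after commuting the $t$-boson operators and using ${\rm Tr}[\phi^b(\phi^\dagger)^b k^d]=\prod_{i=1}^b(1-t^i)/\prod_{i=0}^b(1-t^{d+i})$, the statistics $a_j,b_j$ and the exponents $\lambda'_s-\lambda'_j$ fall out directly (this is the content of the Lemma). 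Your plan of ``combinatorial reindexing/telescoping'' of a ready-made $S_n$-sum from \cite{CantinidGW} therefore does not correspond to anything available; the substantive computation is the $t$-boson trace.
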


In order to clarify the structure of the expression \eqref{main-formula}, we give explicit examples in the Appendix, where we calculate some Macdonald polynomials of sufficiently small size. We give its proof in Section \ref{sec:proof}.

We wish to point out that \eqref{main-formula} has many structural features in common with the work of Kirillov and Noumi \cite{Kirillov_Noumi1,Kirillov_Noumi2}. In these papers the authors construct families of raising operators, which act on Macdonald polynomials by adding columns to the indexing Young diagram. In \cite{Kirillov_Noumi1} the raising operators have an analogous form to Macdonald $q$-difference operators, while in \cite{Kirillov_Noumi2} the raising operators are constructed in terms of generators of the affine Hecke algebra. In both papers the Macdonald polynomial is obtained by the successive action of such raising operators on $1$, the initial state. It would be very interesting to find a precise connection between the results of \cite{Kirillov_Noumi1,Kirillov_Noumi2} and our formula \eqref{main-formula}, if one exists.

\section{Proof}
\label{sec:proof}

In this section we sketch the proof of \eqref{main-formula}, citing results from our earlier paper \cite{CantinidGW}, where we obtained a matrix product formula for a family of non-symmetric polynomials $f_{\lambda}(x_1,\dots,x_n;q,t)$. These polynomials are indexed by compositions $\lambda$, and satisfy the following relations with the generators of the Hecke algebra:
\begin{align*}
T_i f_{\lambda_1,\dots,\lambda_n}(x_1,\dots,x_n;q,t)
=
f_{\lambda_1,\dots,\lambda_{i+1},\lambda_i,\dots,\lambda_n}
(x_1,\dots,x_n;q,t),
\qquad
\text{when}\ \ 
\lambda_i > \lambda_{i+1}.
\end{align*}
These relations allow us to express a polynomial $f_{\mu}$ indexed by an arbitrary composition $\mu$ in terms of Hecke generators acting on $f_{\mu^{+}}$. As was demonstrated in \cite{CantinidGW}, the symmetric Macdonald polynomial $P_{\lambda}$ is obtained as a sum over all polynomials $f_{\mu}$, whose composition is a distinct permutation of the partition $\lambda$:
\begin{align}
\label{mac-perms}
P_{\lambda}(x_1,\dots,x_n;q,t)
=
\sum_{\sigma \in S_{\lambda}}
f_{\sigma \circ \lambda}
(x_1,\dots,x_n;q,t)
=
\sum_{\sigma \in S_{\lambda}}
T_{\sigma}
\circ
f_{\lambda}(x_1,\dots,x_n;q,t).
\end{align}
We now recall the matrix product expression for $f_{\lambda}$ as obtained in \cite{CantinidGW}. Assume $\lambda\subseteq r^n$ and introduce the following family of $(r-s+2) \times (r-s+1)$ operator-valued matrices $L^{(s)}(x)$, $1 \leq s \leq r$. We index rows by $i \in \{0,s,\dots,r\}$ and columns by $j \in \{0,s+1,\dots,r\}$\footnote{This unusual indexing of the entries of the matrix is the most convenient for our purposes, since we ultimately want to identify these matrix elements with the partitions $\lambda[s]$ introduced in Section \ref{sec:partitions}.}, and take
\begin{align}
\nonumber
L^{(s)}_{00}
=
1,
\qquad
&
L^{(s)}_{0j}
=
\phi_j,\ \ 
\text{for }\ s+1 \leq j \leq r,
\qquad
L^{(s)}_{i0}(x)
=
x \times
\left\{
\begin{array}{ll}
\prod_{l=i+1}^{r}
k_l,
&
i=s
\\
\\
\phi^{\dagger}_i
\prod_{l=i+1}^{r}
k_l,
&
s < i \leq r
\end{array}
\right.
\\
\nonumber
\\
\label{Lmat}
&
\qquad
L^{(s)}_{ij}(x)
=
x \times
\left\{
\begin{array}{ll}
\prod_{l=i+1}^{r}
k_l,
&
i=j
\\
\\
\phi^{\dagger}_i
\phi_j
\prod_{l=i+1}^{r}
k_l,
&
i>j
\\
\\
0,
&
i<j
\end{array}
\right.
\quad\ \text{for }\ 
s \leq i \leq r,\ 
s+1 \leq j \leq r.
\end{align}
We also introduce a twist operator 
\begin{align}
\label{twist}
S^{(s)}=\prod_{l=s+1}^{r} k_l^{(l-s)u}
\ \ \text{for }\
1 \leq s \leq r-1,
\qquad
S^{(r)} = 1,
\end{align} 
where we perform the re-parametrization $q=t^{u}$. The operators 
$\{k,\phi, \phi^{\dagger}\}$ are generators of the $t$-boson algebra:
\begin{align}
\label{t-bos}
\phi k = t k \phi,
\qquad
t \phid k = k \phid,
\qquad
\phi \phid - t \phid \phi = 1-t,
\end{align}
with subscripts used to denote commuting copies of the algebra. Note that all operators in $L^{(s)}(x)$ implicitly also carry an index $(s)$, as we will assume that operators in $L^{(s)}(x)$ and $L^{(s')}(x)$ (as well as $S^{(s)}$ and $S^{(s')}$) commute for $s \not= s'$.

From this we construct an $(r+1)$-dimensional vector $\mathbb{A}(x)$ and a total twist operator $\mathbb{S}$:
\begin{align*}
\mathbb{A}(x)
=
L^{(1)}(x) \dots L^{(r)}(x),
\qquad
\mathbb{S}
=
S^{(1)} \dots S^{(r)}.
\end{align*}
We denote the components of $\mathbb{A}(x)$ by 
$A_i(x)$. The principal result of \cite{CantinidGW} was the formula
\begin{align}
\label{matrix-product}
\Omega_{\lambda^{+}}(q,t)
f_{\lambda}(x_1,\dots,x_n;q,t)
=
{\rm Tr}\left[
A_{\lambda_1}(x_1)
\dots
A_{\lambda_n}(x_n)
\mathbb{S}
\right],
\end{align}
where $\lambda = (\lambda_1,\dots,\lambda_n)$ is any composition consisting of $n$ parts 
$\lambda_i \geq 0$, $\Omega_{\lambda^{+}}(q,t)$ is an overall normalization which only depends on the partition $\lambda^+$. Here we trace over suitable representations of the (multiple copies of the) $t$-boson algebra 
\eqref{t-bos}. Up to changes of the normalization $\Omega_{\lambda^{+}}$, equation \eqref{matrix-product} is independent of the choice of these representations as long as they are faithful, so we choose the Fock representation for all copies of the $t$-boson algebra:
\begin{align}
\label{fock}
\phi \ket{m} = (1-t^m) \ket{m-1},
\qquad
\phid \ket{m} = \ket{m+1},
\qquad
k \ket{m} = t^m \ket{m},
\\
\nonumber
\bra{m} \phi = (1-t^{m+1}) \bra{m+1},
\qquad
\bra{m} \phid = \bra{m-1},
\qquad
\bra{m} k = t^m \bra{m}, 
\end{align}
for which the correct normalization\footnote{This normalization is chosen such that $f_{\lambda}$ is monic, or in other words, the coefficient of its leading monomial $x_1^{\lambda_1}\dots x_n^{\lambda_n}$ is 1. This is clearly the normalization required for \eqref{mac-perms} to be valid.} in \eqref{matrix-product} is
\begin{align}
\label{norm}
\Omega_{\lambda^{+}}(q,t)
= 
\prod_{i=1}^{r}
\prod_{j=i+1}^{r}
\frac{1}{1-q^{j-i} t^{(\lambda^{+})'_i-(\lambda^{+})'_j}},
\end{align}
where $r$ is the largest part of $\lambda$. 

A useful way of decomposing \eqref{matrix-product}, which is central to our proof of \eqref{main-formula}, is in terms of the transition matrix elements
\begin{align*}
T^{(s)}_{\lambda,\mu}(x_1,\dots,x_n)
=
{\rm Tr}\left[
L^{(s)}_{\lambda_1,\mu_1}(x_1)
\dots
L^{(s)}_{\lambda_n,\mu_n}(x_n)
S^{(s)}
\right],
\end{align*}
where $\lambda$ and $\mu$ are compositions taking values in $\{0,s,\dots,r\}$ and 
$\{0,s+1,\dots,r\}$, respectively. Using this definition we are able to write down the recursion relation
\begin{align}
\label{rec-rel}
\Omega^{(s)}_{\lambda^{+}}(q,t)
f^{(s)}_{\lambda}(x_1,\dots,x_n;q,t)
=
\sum_{\mu} 
T^{(s)}_{\lambda,\mu}(x_1,\dots,x_n)
\Omega^{(s+1)}_{\mu^{+}}(q,t)
f^{(s+1)}_{\mu}(x_1,\dots,x_n;q,t),
\end{align}
where the sum is taken over compositions $\mu$ such that $m_i(\lambda) = m_i(\mu)$ for all $s+1 \leq i \leq r$. Here $f^{(s)}_{\lambda}(x_1,\dots,x_n;q,t)$ denotes a reduced version of \eqref{matrix-product}, in which the operators $A_i(x)$ are components of 
$\mathbb{A}^{(s)}(x) \equiv L^{(s)}(x) \dots L^{(r)}(x)$, the twist is given by 
$\mathbb{S}^{(s)} \equiv S^{(s)} \dots S^{(r)}$, and
\begin{align*}
\Omega^{(s)}_{\lambda^{+}}(q,t)
= 
\prod_{i=s}^{r}
\prod_{j=i+1}^{r}
\frac{1}{1-q^{j-i} t^{(\lambda^{+})'_i-(\lambda^{+})'_j}}.
\end{align*}
To complete the proof of \eqref{main-formula}, we establish the following result.

\begin{lem}
Let $\lambda$ be a partition with parts in $\{0,s,\dots,r\}$, and $\mu$ a composition with parts in 
$\{0,s+1,\dots,r\}$, such that $m_i(\lambda) = m_i(\mu)$ for all $s+1 \leq i \leq r$. Then 
\begin{align}
\label{trans-coeff}
T^{(s)}_{\lambda,\mu}(x_1,\dots,x_n)
=
\frac{
x_{\lambda}
C_s(\lambda,\mu)
\Omega^{(s)}_{\lambda}(q,t)
}
{
\Omega^{(s+1)}_{\mu^{+}}(q,t)
},
\end{align}
where $C_s(\lambda,\mu)$ is given by \eqref{coeff}.
\end{lem}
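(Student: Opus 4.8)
The plan is to evaluate $T^{(s)}_{\lambda,\mu}$ directly from its definition as a Fock--space trace. First I would dispose of the vanishing case: if $0<\lambda_k<\mu_k$ for some $k$ then the entry $L^{(s)}_{\lambda_k,\mu_k}$ in \eqref{Lmat} is $0$ (it lies strictly above the diagonal of $L^{(s)}$), so $T^{(s)}_{\lambda,\mu}=0$, matching $C_s(\lambda,\mu)=0$. In the remaining case each entry $L^{(s)}_{\lambda_k,\mu_k}(x_k)$ carries a scalar prefactor $x_k$ exactly when $\lambda_k\neq 0$, and since $\lambda$ is a partition the product of all these prefactors is $\prod_{k=1}^{\ell(\lambda)}x_k=x_{\lambda}$. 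Factoring this out leaves a trace of a product of $t$-boson generators; since the generators $\{\phi_l,\phid_l,k_l\}$ occurring in $L^{(s)}$ and in the twist $S^{(s)}$ only involve the commuting copies $l\in\{s+1,\dots,r\}$, this trace factorizes as $\prod_{l=s+1}^{r}\tau_l$, where $\tau_l$ is a single-mode trace over one Fock space of the ordered product of all copy-$l$ operators.

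The first substantive step is to determine, for each $l$, the word that $\tau_l$ traces. Reading off \eqref{Lmat}, copy $l$ acquires a $\phid_l$ from every position with $\lambda_k=l>\mu_k$ (there are $b_l(\lambda,\mu)$ such), a $\phi_l$ from every position with $\lambda_k=0,\ \mu_k=l$ (there are $a_l(\lambda,\mu)$ such) and from every position with $\lambda_k>l,\ \mu_k=l$, a $k_l$ from every position with $s\leq\lambda_k<l$ (there are $\lambda'_s-\lambda'_l$ such), together with the twist factor $k_l^{(l-s)u}$ from \eqref{twist}. Because $\lambda$ is a partition its parts are non-increasing, so these positions occur in the blocks $\lambda_k>l$, then $\lambda_k=l$, then $s\leq\lambda_k<l$, then $\lambda_k=0$, whence the word is
\begin{align*}
\phi_l^{\,b_l-a_l}\,(\phid_l)^{b_l}\,k_l^{\,\lambda'_s-\lambda'_l}\,\phi_l^{\,a_l}\,k_l^{(l-s)u},
\end{align*}
and the numbers of $\phi_l$'s and $\phid_l$'s coincide precisely because $m_l(\lambda)=m_l(\mu)$, which is the hypothesis (otherwise $\tau_l=0$).

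Next I would carry out the single-mode computation in the Fock representation \eqref{fock}, with $q=t^u$. Applying the operators to $\ket{m}$ from the right, the block $\phi_l^{a_l}$ forces the summand to vanish unless $m\geq a_l$; writing $m=a_l+n$ with $n\geq 0$, the various products of factors $1-t^{\bullet}$ telescope to $\prod_{k=1}^{b_l}(1-t^{n+k})$, while the two diagonal blocks contribute $t^{(\lambda'_s-\lambda'_l)n}$ and $q^{(l-s)(a_l+n)}$. This gives
\begin{align*}
\tau_l=q^{(l-s)a_l}\sum_{n\geq 0}\big(q^{\,l-s}t^{\,\lambda'_s-\lambda'_l}\big)^{n}\prod_{k=1}^{b_l}(1-t^{n+k}),
\end{align*}
and the identity $\sum_{n\geq 0}z^n\prod_{k=1}^{b}(1-t^{n+k})=\prod_{k=1}^{b}(1-t^k)\big/\prod_{k=0}^{b}(1-zt^k)$ (the generating function for Gaussian binomial coefficients) turns this into
\begin{align*}
\tau_l=\frac{1}{1-q^{\,l-s}t^{\,\lambda'_s-\lambda'_l}}\;q^{(l-s)a_l}\prod_{k=1}^{b_l}\frac{1-t^k}{1-q^{\,l-s}t^{\,\lambda'_s-\lambda'_l+k}}.
\end{align*}

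Finally I would assemble the product over $l$. The second factor of $\tau_l$ is exactly the $j=l$ term of $C_s(\lambda,\mu)$ in \eqref{coeff} (with $i=s$), so $\prod_{l=s+1}^{r}\tau_l=C_s(\lambda,\mu)\prod_{l=s+1}^{r}(1-q^{\,l-s}t^{\,\lambda'_s-\lambda'_l})^{-1}$. On the other hand, $m_i(\lambda)=m_i(\mu)$ for $s+1\leq i\leq r$ yields $\lambda'_i=(\mu^{+})'_i$ for all $i\geq s+1$, so the $i\geq s+1$ part of the double product defining $\Omega^{(s)}_{\lambda}$ is precisely $\Omega^{(s+1)}_{\mu^{+}}$, hence $\Omega^{(s)}_{\lambda}/\Omega^{(s+1)}_{\mu^{+}}$ equals the leftover $i=s$ row $\prod_{l=s+1}^{r}(1-q^{\,l-s}t^{\,\lambda'_s-\lambda'_l})^{-1}$. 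Combining these and reinstating $x_{\lambda}$ yields \eqref{trans-coeff}. I expect the only delicate point to be the bookkeeping in the second paragraph --- tracking which boson copy and in which order each matrix entry contributes --- and in particular the origin of the prefactor $q^{(l-s)a_l}$, which relies on the $a_l$ bare factors $\phi_l$ (coming from the entries $L^{(s)}_{0l}=\phi_l$) sitting to the right of all the $k_l$'s, so that they must be commuted past the twist $k_l^{(l-s)u}$ via $\phi k^{(l-s)u}=q^{\,l-s}k^{(l-s)u}\phi$; the rest is routine.
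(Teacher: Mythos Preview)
Your proof is correct and follows essentially the same route as the paper: both handle the vanishing case via $L^{(s)}_{ij}=0$ for $0<i<j$, factor out $x_\lambda$, split the trace as a product over the commuting $t$-boson copies $l=s+1,\dots,r$, evaluate each single-mode trace, and then identify the residual product $\prod_{l}(1-q^{\,l-s}t^{\lambda'_s-\lambda'_l})^{-1}$ with $\Omega^{(s)}_\lambda/\Omega^{(s+1)}_{\mu^+}$. The only cosmetic differences are that the paper reorders the word using cyclicity of the trace (rather than your partition-block reading) and quotes the closed formula ${\rm Tr}\big[\phi^b(\phid)^b k^d\big]=\prod_{k=1}^{b}(1-t^k)\big/\prod_{k=0}^{b}(1-t^{d+k})$ directly, whereas you re-derive it via the Gaussian-binomial generating function identity.
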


\begin{proof}
We begin by noticing that, by virtue of the vanishing of some of the matrix elements \eqref{Lmat}, $T^{(s)}_{\lambda,\mu} = 0$ if $0 < \lambda_k < \mu_k$ for some $k$. Hence both sides of \eqref{trans-coeff} have the same vanishing property. This constrains $\lambda_k \geq \mu_k$ for all $1 \leq k \leq \ell(\lambda)$, which we assume from this point on. Given that $\lambda$ is a partition, using the explicit form of the $L^{(s)}$ matrix entries we find that
\begin{multline}
\label{trace-initial}
T^{(s)}_{\lambda,\mu}(x_1,\dots,x_n)
=
\prod_{i=1}^{\ell(\lambda)}
x_i
\times
{\rm Tr}
\left[
\left(\prod_{l=s+1}^{r} k_l^{m_s(\lambda)} \right)
\left( \prod_{j=s+1}^{r} \phi_j^{a_j(\lambda,\mu)} \right)
S^{(s)}
\left( (\phid_r)^{b_r(\lambda,\mu)} \prod_{j=s+1}^{r-1} \phi_j^{\#} \right)
\right.
\\
\times
\left.
\left(k_r^{m_{r-1}(\lambda)} 
(\phid_{r-1})^{b_{r-1}(\lambda,\mu)} \prod_{j=s+1}^{r-2} \phi_j^{\#} \right)
\dots
\left( 
\prod_{l=s+2}^{r} k_l^{m_{s+1}(\lambda)}  
(\phid_{s+1})^{b_{s+1}(\lambda,\mu)} \right)
\right]
\end{multline}
where we use $\#$ to denote a multiplicity in the composition $\mu$ which does not affect the final answer, so we suppress it. Note that we have used the cyclicity of the trace to reorder the product, so that the leftmost parentheses correspond with parts of $\lambda$ of size $s$, the next parentheses with parts of size 0, followed by the twist, followed by parts of size $r$ and decreasing thereafter down to parts of size $s+1$.

The expression \eqref{trace-initial} is seemingly very complicated. But in view of the fact that operators with different subscripts commute, and the simple commutation \eqref{t-bos} between operators from the same copy of the algebra, as well as the factorized form 
\eqref{twist} of the twist, we can factorize \eqref{trace-initial} as follows:
\begin{align}
\label{trace-factor}
T^{(s)}_{\lambda,\mu}(x_1,\dots,x_n)
=
\prod_{i=1}^{\ell(\lambda)}
x_i
\times
\prod_{j=s+1}^{r}
q^{(j-s) a_j(\lambda,\mu)}
{\rm Tr}
\left[
\phi_j^{b_j(\lambda,\mu)}
(\phid_j)^{b_j(\lambda,\mu)}
k_j^{m_s(\lambda)+\cdots+m_{j-1}(\lambda)+(j-s)u}
\right].
\end{align} 
Each trace in this product can be calculated using the general relation
\begin{align*}
{\rm Tr}
\left[
\phi^b (\phid)^c k^d
\right]
=
\frac{\delta_{b,c} \prod_{i=1}^{b} (1-t^i)}{\prod_{i=0}^{b} (1-t^{d+i})},
\ \ 
\text{for }\ b,c \geq 0,\ d \in \mathbb{C},
\end{align*}
valid for the representation \eqref{fock} which we have used. Applying this relation to \eqref{trace-factor} and using the fact that $\sum_{s \leq i < j} m_i(\lambda) = \lambda'_s-\lambda'_j$, we obtain
\begin{align}
\label{prod_with_k=0}
T^{(s)}_{\lambda,\mu}(x_1,\dots,x_n)
=
x_{\lambda}
\times
\prod_{j=s+1}^{r}
\left(
\frac{q^{(j-s) a_j(\lambda,\mu)}\displaystyle{\prod_{k=1}^{b_j(\lambda,\mu)} (1-t^k)}}
{\displaystyle{\prod_{k=0}^{b_j(\lambda,\mu)} (1-q^{j-s} t^{\lambda'_s-\lambda'_j+k})}}
\right).
\end{align}
Finally, in view of the fact that $m_i(\lambda) = m_i(\mu)$ for all $s+1 \leq i \leq r$, we find that 
\begin{align*}
\Omega^{(s)}_{\lambda}(q,t)
\Big/
\Omega^{(s+1)}_{\mu^{+}}(q,t)
=
\prod_{j=s+1}^{r}
\frac{1}{1-q^{j-s} t^{\lambda'_s-\lambda'_j}},
\end{align*}
allowing us to extract the $k=0$ term from the denominator of \eqref{prod_with_k=0}, recovering exactly \eqref{trans-coeff}.
 
\end{proof}

Returning to equation \eqref{rec-rel}, we have thus shown that when $\lambda$ is a partition,
\begin{align}
\label{rec-rel2}
f^{(s)}_{\lambda}(x_1,\dots,x_n;q,t)
=
x_{\lambda} 
\times
\sum_{\mu} 
C_s(\lambda,\mu)
f^{(s+1)}_{\mu}(x_1,\dots,x_n;q,t),
\qquad
\forall\ 1 \leq s \leq r-1.
\end{align}
To complete the proof of equation \eqref{main-formula} it remains only to combine \eqref{mac-perms} with the recursion relation \eqref{rec-rel2}, iterated $r-1$ times, with the final value $f^{(r)}_{\mu} = x_{\mu}$. At each step in the iteration, we need to order the composition $\mu$ indexing $f^{(s+1)}_{\mu}$ so that it becomes a partition (otherwise the result \eqref{rec-rel2} is not valid at the next stage). This ordering is achieved by the action of $T_{\sigma}$ operators at each step.

\section{Specializations}

\subsection{Monomial symmetric polynomials}

By specializing $t=1$, the Macdonald polynomial $P_{\lambda}$ reduces to a monomial symmetric polynomial $m_{\lambda}(x_1,\dots,x_n)$. It is especially simple to evaluate \eqref{main-formula} at $t=1$. Due to the factors $1-t^k$ in \eqref{coeff}, we find that each coefficient $C_i$ vanishes, unless $b_j(\lambda[i-1],\sigma\circ\lambda[i])=0$ for all $i+1 \leq j \leq r$. This constrains each $\sigma$ (apart from that of the leftmost sum) to be the identity. Furthermore, because $a_j(\lambda[i-1],\lambda[i]) = 0$ for all $i+1 \leq j \leq r$, all coefficients $C_i(\lambda[i-1],\lambda[i]) = 1$.

Finally, at $t=1$ the Hecke generators \eqref{hecke-gen} reduce to simple transpositions $s_i$. Putting all of this together, equation \eqref{main-formula} reduces to
\begin{align*}
P_{\lambda}(x_1,\dots,x_n;q,1)
=
\sum_{\sigma \in S_{\lambda}}
s_{\sigma} \circ x_{\lambda} \circ \prod_{i=1}^{r-1} x_{\lambda[i]}
=
\sum_{\sigma \in S_{\lambda}}
\sigma
\circ
\left(
\prod_{i=1}^{n}
x_i^{\lambda_i}
\right)
=
m_{\lambda}(x_1,\dots,x_n),
\end{align*}
as required.

\subsection{Hall--Littlewood polynomials}

The next specialization of interest is when $q=0$, which gives the Hall--Littlewood polynomial $P_{\lambda}(x_1,\dots,x_n;t)$. In fact the formula \eqref{main-formula} simplifies greatly when $q=0$. Clearly
\begin{align*}
\left.
C_i
\left(
\begin{array}{@{}c@{}}
\lambda[i-1]
\medskip \\ 
\sigma\circ\lambda[i]
\end{array}
\right)
\right|_{q=0}
=
0,
\ \
\text{unless }\
a_j\left(\lambda[i-1],\sigma \circ \lambda[i]\right)=0
\ \ \text{for all } j>i.
\end{align*}
This constrains each permutation $\sigma$ (apart from that of the leftmost sum) to be the identity. Furthermore, in view of the fact that 
$b_j(\lambda[i-1],\lambda[i])=0$ for all $j>i$, we have 
$C_i(\lambda[i-1],\lambda[i])=1$ for all $1 \leq i \leq r-1$. Hence the formula 
\eqref{main-formula} reduces to
\begin{align}
\label{HL1}
P_{\lambda}(x_1,\dots,x_n;t)
=
\sum_{\sigma \in S_{\lambda}}
T_{\sigma}
\circ
x_{\lambda}
\circ
\prod_{i=1}^{r-1}
x_{\lambda[i]}
=
\sum_{\sigma \in S_{\lambda}}
T_{\sigma}
\circ
\left(
\prod_{i=1}^{n} x_i^{\lambda_i}
\right).
\end{align}
Equation \eqref{HL1} is a known expression for Hall--Littlewood polynomials, and seems to have been first pointed out in \cite{Duchamp_et_al} (Remark 3, following Theorem 3.1). For more details we refer the reader to Section 6.2 of \cite{Hivert}, where the correspondence between \eqref{HL1} and the standard sum expression
\begin{align}
\label{HL2}
P_{\lambda}(x_1,\dots,x_n;t)
=
\sum_{\sigma \in S_{\lambda}}
\sigma
\circ
\left(
\prod_{i=1}^{n} x_i^{\lambda_i}
\prod_{\lambda_i > \lambda_j}
\frac{x_i-t x_j}{x_i-x_j}
\right)
\end{align}
(\cite{MacdBook}, Chapter III, Equation (2.2)) is explained in greater detail.

\subsection{Jack polynomials}

The Jack polynomial $P_{\lambda}^{(\alpha)}(x_1,\dots,x_n)$ is obtained as the 
$q=t^{\alpha}, t \rightarrow 1$ limit of the Macdonald polynomial $P_{\lambda}$ (\cite{MacdBook}, Chapter VI, Section 10). Let us first examine the effect of this limit on the coefficients \eqref{coeff}. We find that
\begin{align*}
C_i(\lambda,\mu) = 0,\ \text{if any}\ 0 < \lambda_k < \mu_k,
\quad
C_i(\lambda,\mu)
=
\prod_{j=i+1}^{r}
\prod_{k=1}^{b_j(\lambda,\mu)}
\left(
\frac{k}{(j-i)\alpha+\lambda_i'-\lambda_j'+k}
\right),\ \text{otherwise.}
\end{align*}
Further to this, in the limit $q=t^{\alpha}, t \rightarrow 1$ the Hecke generators 
\eqref{hecke-gen} are again just simple transpositions $s_i$. It follows that
\begin{align*}
P_{\lambda}^{(\alpha)}(x_1,\dots,x_n)
=
\sum_{\sigma \in S_{\lambda} }
s_{\sigma}
\circ
x_{\lambda}
\circ
\prod_{i=1}^{r-1}
\left(
\sum_{\sigma \in S_{\lambda[i]}}
C_i
\left(
\begin{array}{@{}c@{}}
\lambda[i-1]
\medskip \\ 
\sigma\circ\lambda[i]
\end{array}
\right)
s_{\sigma}
\circ
x_{\lambda[i]}
\circ
\right)
1,
\end{align*}
where it is trivial to calculate the action of $s_{\sigma}$ at each step, since it simply acts on monomials in $(x_1,\dots,x_n)$ by the permutation $\sigma$. We therefore obtain
\begin{multline*}
P_{\lambda}^{(\alpha)}(x_1,\dots,x_n)
=
\sum_{\sigma[0] \in S_{\lambda[0]}}
\sum_{\sigma[1] \in S_{\lambda[1]}}
\cdots
\sum_{\sigma[\b{r}] \in S_{\lambda[\b{r}]}}
\prod_{i=1}^{\b{r}}
C_i
\left(
\begin{array}{@{}c@{}}
\lambda[i-1]
\medskip \\ 
\sigma[i]\circ\lambda[i]
\end{array}
\right)
\\
\times
\Big\{ \sigma[0] \circ x_{ \lambda[0]} \Big\}
\Big\{ \sigma[0] \circ \sigma[1] \circ x_{\lambda[1]} \Big\}
\cdots
\Big\{ \sigma[0] \circ \sigma[1] \circ \cdots \circ \sigma[\b{r}] \circ x_{\lambda[\b{r}]} \Big\},
\end{multline*}
where we have set $r-1 \equiv \b{r}$ to make the formula more compact.

\subsection{$q$--Whittaker polynomials}

Another special case of \eqref{main-formula} is when $t=0$, giving $q$--Whittaker polynomials $P_{\lambda}(x_1,\dots,x_n;q,0)$. Taking this specialization of 
\eqref{main-formula}, we obtain
\begin{align*}
P_{\lambda}(x_1,\dots,x_n;q,0)
=
\sum_{\sigma \in S_{\lambda} }
D_{\sigma}
\circ
x_{\lambda}
\circ
\prod_{i=1}^{r-1}
\left(
\sum_{\sigma \in S_{\lambda[i]}}
C_i
\left(
\begin{array}{@{}c@{}}
\lambda[i-1]
\medskip \\ 
\sigma\circ\lambda[i]
\end{array}
\right)
D_{\sigma}
\circ
x_{\lambda[i]}
\circ
\right)
1
\end{align*}
with coefficients that satisfy $C_i(\lambda,\mu) = 0$ if any $0 < \lambda_k < \mu_k$, and $C_i(\lambda,\mu)=\prod_{j = i+1}^{r} q^{(j-i)a_j(\lambda,\mu)}$ otherwise, and where each $D_{\sigma}$ is now composed of the divided-difference operators
\begin{align*}
D_{i}
=
(x_i/x_{i+1}-1)^{-1}(1-s_i),
\qquad
1 \leq i \leq n-1.
\end{align*}

\section*{Acknowledgments}

We thank the Galileo Galilei Institute and the organisers of the research program \textit{Statistical Mechanics, Integrability and Combinatorics} for kind hospitality during part of this work. It is a pleasure to thank Luigi Cantini for collaboration on \cite{CantinidGW}, which was very motivational for this work; Philippe Di Francesco for bringing the papers \cite{Kirillov_Noumi1,Kirillov_Noumi2} to our attention; Ole Warnaar for directing us to the references \cite{Duchamp_et_al,Hivert} for equation \eqref{HL1} and for many helpful remarks; and Paul Zinn-Justin for extended discussions on related topics. JdG and MW are generously supported by the Australian Research Council (ARC) and the ARC Centre of Excellence for Mathematical and Statistical Frontiers (ACEMS).

\appendix

\section{Explicit examples}

\subsection{Two variable example}

In the case $\lambda = (3,1)$, we have 
\begin{align*}
\lambda[0] = (3,1),\ \lambda[1] = (3,0),\ \lambda[2] = (3,0),\ \lambda[3] = (0,0),
\end{align*}
and $S_{\lambda[0]} = S_{\lambda[1]} = S_{\lambda[2]} = S_2$. Hence
\begin{align*}
P_{(3,1)}(x_1,x_2;q,t)
=
\sum_{\sigma \in S_2}
T_{\sigma}
\circ
x_1 x_2
\circ
\sum_{\rho \in S_2}
C_1\left( 
\begin{array}{@{}l@{}l@{}}
\lambda[0]_1 & \lambda[0]_2 \\
\lambda[1]_{\rho_1} & \lambda[1]_{\rho_2} 
\end{array}\right)
T_{\rho}
\circ
x_1
\circ
\sum_{\pi \in S_2}
C_2\left( 
\begin{array}{@{}l@{}l@{}}
\lambda[1]_1 & \lambda[1]_2 \\
\lambda[2]_{\pi_1} & \lambda[2]_{\pi_2} 
\end{array}\right)
T_{\pi}
\circ
x_1.
\end{align*}
We compute each sum in turn, starting with the rightmost:
\begin{align*}
\sum_{\pi \in S_2}
C_2\left( 
\begin{array}{@{}l@{}l@{}}
\lambda[1]_1 & \lambda[1]_2 \\
\lambda[2]_{\pi_1} & \lambda[2]_{\pi_2} 
\end{array}\right)
T_{\pi}
\circ
x_1
&=
x_1
+
C_2\left( 
\begin{array}{@{}cc@{}}
3 & 0 \\
0 & 3
\end{array}\right)
T_{1}
\circ
x_1
=
x_1
+
q \left( \frac{1-t}{1-qt} \right)
x_2.
\end{align*}
Combining with the middle sum, we find that
\begin{align*}
\sum_{\rho \in S_2}
C_1\left( 
\begin{array}{@{}l@{}l@{}}
\lambda[0]_1 & \lambda[0]_2 \\
\lambda[1]_{\rho_1} & \lambda[1]_{\rho_2} 
\end{array}\right)
T_{\rho}
\circ
\left(
x_1^2
+
q\frac{1-t}{1-qt} x_1 x_2
\right)
&=
x_1^2
+
q\frac{1-t}{1-qt} x_1 x_2,
\end{align*}
where we have used the fact that
$C_1\left( 
\begin{array}{@{}cc@{}}
3 & 1 \\
3 & 0
\end{array}\right)= 1$ and 
$C_1\left( 
\begin{array}{@{}cc@{}}
3 & 1 \\
0 & 3
\end{array}\right)= 0$. Combining everything and passing to the leftmost sum, we have
\begin{align*}
P_{(3,1)}(x_1,x_2;q,t)
&=
\sum_{\sigma \in S_2}
T_{\sigma}
\circ
\left(
x_1^3 x_2
+
q  \frac{1-t}{1-qt} 
x_1^2 x_2^2
\right)
\\
&=
(1+T_{1})\circ
\left(
x_1^3 x_2
+
q  \frac{1-t}{1-qt} 
x_1^2 x_2^2
\right)
=
x_1^3 x_2
+
\frac{1-t+q-qt}{1-qt}
x_1^2 x_2^2
+
x_1 x_2^3.
\end{align*}

\subsection{Three variable example}

In the case $\lambda = (3,2,1)$, we have
\begin{align*}
\lambda[0] = (3,2,1),\
\lambda[1] = (3,2,0),\
\lambda[2] = (3,0,0),\
\lambda[3] = (0,0,0),
\end{align*}
and $S_{\lambda[0]} =S_{\lambda[1]} = S_3$, $S_{\lambda[2]} = \{(1,2,3),(2,1,3),(2,3,1)\}$. Hence
\begin{multline*}
P_{(3,2,1)}(x_1,x_2,x_3;q,t)
=
\\
\sum_{\sigma \in S_3}
T_{\sigma}
\circ
x_1 x_2x_3
\circ
\sum_{\rho\in S_3}
C_1\left( 
\begin{array}{@{}l@{}l@{}l@{}}
\lambda[0]_1 & \lambda[0]_2 & \lambda[0]_3 \\
\lambda[1]_{\rho_1} & \lambda[1]_{\rho_2} & \lambda[1]_{\rho_3}
\end{array}\right)
T_{\rho}
\circ
x_1x_2
\circ
\sum_{\pi \in S_{\lambda[2]}}
C_2
\left(
\begin{array}{@{}l@{}l@{}l@{}}
\lambda[1]_1 & \lambda[1]_2 & \lambda[1]_3 \\
\lambda[2]_{\pi_1} & \lambda[2]_{\pi_2} & \lambda[2]_{\pi_3}
\end{array}\right)
T_{\pi}
\circ
x_1.
\end{multline*}
Starting with the rightmost sum, we have
\begin{align*}
\sum_{\pi \in S_{\lambda[2]}}
C_2
\left(
\begin{array}{@{}l@{}l@{}l@{}}
\lambda[1]_1 & \lambda[1]_2 & \lambda[1]_3 \\
\lambda[2]_{\pi_1} & \lambda[2]_{\pi_2} & \lambda[2]_{\pi_3}
\end{array}\right)
T_{\pi}
\circ
x_1
&=
x_1
+
C_2
\left(
\begin{array}{@{}ccc@{}}
3 & 2 & 0 \\
0 & 0 & 3
\end{array}\right)
T_{2} T_{1}
\circ
x_1
= 
x_1 +q\left(\frac{1-t}{1-qt^2}\right) x_3,
\end{align*}
where we have used that $C_2
\left(
\begin{array}{@{}ccc@{}}
3 & 2 & 0 \\
0 & 3 & 0
\end{array}\right)=0$. Combining with the middle sum and using the fact that 
$C_1
\left(
\begin{array}{@{}ccc@{}}
3 & 2 & 1 \\
3 & 2 & 0
\end{array}\right)=1
$
and all other $C_1=0$, we find
\begin{align*}
\sum_{\rho\in S_3}
C_1\left( 
\begin{array}{@{}l@{}l@{}l@{}}
\lambda[0]_1 & \lambda[0]_2 & \lambda[0]_3 \\
\lambda[1]_{\rho_1} & \lambda[1]_{\rho_2} & \lambda[1]_{\rho_3}
\end{array}\right)
T_{\rho}
\circ \left(  x_1^2x_2 +q\frac{1-t}{1-qt^2}x_1x_2 x_3\right)
=  x_1^2x_2 +q\frac{1-t}{1-qt^2}x_1x_2 x_3 .
\end{align*}
Combining everything, the leftmost sum finally gives
\begin{align*}
\sum_{\sigma \in S_3}
T_{\sigma} \circ \left(  x_1^3 x_2^2 x_3 +q\frac{1-t}{1-qt^2}x_1^2 x_2^2 x_3^2 \right) = \sum_{\sigma \in S_3} x_{\sigma_1}^3 x_{\sigma_2}^2 x_{\sigma_3} +  (2+q+t+2qt)\frac{1-t}{1-qt^2}x_1^2x_2^2 x_3^2.
\end{align*}

\end{document}